\newtheorem{thm}{Theorem}[section]
\newtheorem{prop}[thm]{Proposition}
\begin{document}

\title{On a goodness of fit test for the Cauchy distribution}

\author{Emanuele Taufer}

\author{\emph{Emanuele Taufer} \\ Department of Economics and Management, University of Trento \\ \textsc{emanuele.taufer@unitn.it}
 }

\maketitle
\begin{abstract}
The paper discusses a test for the hypothesis that a random sample comes from the Cauchy distribution. The test statistics is derived from a characterization and is based on the characteristic function. Properties of the test are discussed and its performance measured by simulations. The test presented turns out to be extremely powerful in a wide range of alternatives.

\medskip\noindent
{\bf Keywords} : Cauchy distribution, goodness-of-fit test, characteristic function, Monte Carlo simulation, affine invariant test.

\end{abstract}

\section{Introduction}
The Cauchy distribution with location $\theta \in \mathbb{R}$ and scale $\lambda>0$, denoted with $C(\theta, \lambda)$, has density
\begin{equation}
f(x; \theta, \lambda)= \frac{1}{\pi \lambda} \left[ 1+ \left(\frac{x-\theta}{\lambda}\right)^2 \right]^{-1}, \quad x \in \mathbb{R}
\end{equation}
and distribution function
\begin{equation}\label{DF}
F(x; \theta, \lambda)=\frac{1}{2} + \frac{1}{\pi} \arctan \left( \frac{x-\theta}{\lambda} \right). 
\end{equation}
The case where $\theta=0$ and $\lambda=1$ is referred to as the standard case.  Poisson was the first noting that the standard Cauchy distribution has some peculiar properties and could provide counterexamples to some generally accepted results in statistics; for an interesting historical account of this distribution see Stigler (1974). The Cauchy distribution has interesting applications in  seismography, chemistry and physics, see  for example  Kagan (1992), Winterton et al. (1992), Min et al. (1996), Stapf et al. (1996), Taufer et al. (2009); it also can serve as building block in statistical models  for volatility and multifractality, see, e.g. Anh et al. (2010), Meintanis and Taufer (2012), Leonenko et. al (2013) . For further applications and characterizations see Johnson et al. (2002).

Let $\phi(t)$ denote the characteristic function (CF) of a random variable X; from  Sato (1999, formula 13.1)  X is defined to be stable if for any $a>0$, there are $b>0$ and $c \in \mathbb{R}$ such that
\begin{equation}\label{FE}
\phi(t)^a=\phi(bt)e^{itc}
\end{equation}
The above relation has been exploited in Meintanis et al. (2015) to construct  goodness-of-fit tests for multivariate stable distributions. In this paper we aim at discussing in detail the case of the univariate Cauchy distribution which, as we will see, presents important peculiar characteristics and provides an extremely powerful goodness-of-fit test as it will be shown by comparisons with other tests presented in the literature. 

More formally, given a random sample, $X_1, \dots , X_n$  from some distribution $F$ we are interested in testing the hypothesis
\begin{equation}\label{Hypo}
H_0: F  \in {\cal C}= \{ C(\theta, \lambda): \theta \in \mathbb{R}, \lambda >0 \}.
\end{equation}

Previous related work closely connected to the approach followed here is that of Henze and Wagner (1997), G{\"u}rtler and Henze (2000), Matsui, M., \& Takemura, A. (2005); differently to those works, exploiting \eqref{FE} in the construction of the test statistics has several advantages: firstly one does  not need to specify a parametric form of the CF in the test statistics; secondly one can avoid demeaning the observations which means that if the scale parameter is specified by the null hypothesis there is no need to estimating parameters from the data; thirdly, the appropriate choice of  user-defined parameters allow to obtain extremely powerful tests for a large range of alternative; indeed, it will turn out that a specific  parametrization of the test statistic yields a powerful omnibus test for the Cauchy hypothesis.

The organization of the paper is the following: Section 2  shows that \eqref{FE} with the choice $a=b$ and $c=0$ actually characterizes the Cauchy distribution, the test statistics is introduced and previous contribution of the literature are discussed.  Section 3 presents a power study based on the Monte Carlo method and Section 4 concludes.

\section{Testing for the Cauchy hypothesis}

\subsection{A characterization of the Cauchy distribution}

The $C(\theta,\lambda)$ distribution has CF function
\begin{equation}
\phi(t) = \exp\{it \theta - \lambda|t|\}.
\end{equation}
In order to justify the test statistic proposed it is now proven formally that a specific choice of $a, b$ and $c$ characterizes the Cauchy distribution.

\begin{prop}
Formula \eqref{FE} holds for $b=a$ and $ c=0$ if and only if $\phi(t)$ is the CF of a $ C(\theta,\lambda)$ r.v..
\end{prop}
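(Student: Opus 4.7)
The sufficiency direction is immediate: if $\phi(t)=\exp\{it\theta-\lambda|t|\}$, then
$\phi(t)^a=\exp\{iat\theta-a\lambda|t|\}=\exp\{i(at)\theta-\lambda|at|\}=\phi(at)$,
so \eqref{FE} is satisfied with $b=a$, $c=0$ by plain algebra. The content of the proposition lies in the converse.

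\textbf{Necessity.} Assume $\phi(at)=\phi(t)^{a}$ for every $a>0$ and every $t\in\mathbb{R}$. The first step is to show $\phi$ has no real zeros. If $\phi(t_{0})=0$ for some $t_{0}\neq 0$, the functional equation forces $\phi(at_{0})=0$ for all $a>0$; letting $a\downarrow 0$ and using continuity of $\phi$ together with $\phi(0)=1$ yields a contradiction. Hence $\phi(t)\neq 0$ for all $t$, and there exists a unique continuous logarithm $\psi(t):=\log\phi(t)$ with $\psi(0)=0$. Lifting the identity $\phi(at)=\phi(t)^{a}=\exp\{a\psi(t)\}$ to this continuous branch gives $\psi(at)-a\psi(t)\in 2\pi i\mathbb{Z}$; since the left-hand side is continuous in $(a,t)$ and vanishes at $a=1$, it is identically zero. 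Thus
\begin{equation*}
\psi(at)=a\,\psi(t),\qquad a>0,\ t\in\mathbb{R}.
\end{equation*}

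\textbf{Identification of $\phi$.} Setting $t=1$ and $t=-1$ in the homogeneity relation gives $\psi(s)=s\,\psi(1)$ for $s>0$ and $\psi(s)=|s|\,\psi(-1)$ for $s<0$. The hermitian symmetry of characteristic functions, $\phi(-t)=\overline{\phi(t)}$, implies $\psi(-1)=\overline{\psi(1)}$. Writing $\psi(1)=i\theta-\lambda$ with $\theta\in\mathbb{R}$ and $\lambda\in\mathbb{R}$, one obtains in both cases
\begin{equation*}
\psi(t)=it\theta-\lambda|t|,\qquad t\in\mathbb{R}.
\end{equation*}
Non-negativity of $\lambda$ (so that $|\phi|\le 1$) and non-degeneracy ($\lambda>0$) complete the identification of $\phi$ as the CF of a $C(\theta,\lambda)$ law.

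\textbf{Expected difficulty.} The routine step is the symmetry/homogeneity computation; the genuinely delicate point is the transition from a multiplicative identity for $\phi$ to an additive identity for its logarithm. This requires the no-zero argument and a careful continuous selection of the branch of $\log\phi$ so that $\phi(t)^{a}$ is unambiguously defined for non-integer $a>0$ and the $2\pi i\mathbb{Z}$-valued discrepancy can be eliminated. Once the functional equation is safely transported to $\psi$, the remainder of the proof is a short linear-algebra argument exploiting hermitian symmetry.
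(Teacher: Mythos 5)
Your proof is correct, but it takes a genuinely different route from the paper's. The paper argues structurally: it first notes that the hypothesis makes $\phi$ infinitely divisible (since $\phi(t)^{a/n}=\phi(at/n)$ is a CF for every $n$), then factors $\phi(t)^a=\phi(t)^{a_1}\phi(t)^{a_2}$ to arrive at Lukacs's functional equation \eqref{LUC}, invokes Lukacs's Theorems 5.7.2--5.7.3 to conclude that $\phi$ must be Gaussian or stable with exponent $i\theta t-\lambda|t|^{\alpha}\bigl[1+i\beta\,\mathrm{sgn}(t)\,\omega(|t|,\alpha)\bigr]$, and finally checks case by case that $b=a$, $c=0$ forces $\sigma^{2}=0$, $\alpha=1$, $\beta=0$. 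You instead solve the functional equation directly: no zeros, distinguished logarithm $\psi$, the homogeneity $\psi(at)=a\psi(t)$, then hermitian symmetry pins down $\psi(t)=it\theta-\lambda|t|$. Your route is more elementary and self-contained -- it bypasses the classification theorem entirely -- and it makes explicit the branch issue that the paper leaves implicit: the proposition is only true when $\phi(t)^{a}$ is understood via the distinguished logarithm (your continuity argument killing the $2\pi i\mathbb{Z}$ discrepancy is exactly the right tool), not via the pointwise principal branch the paper describes for computing the empirical CF. What the paper's route buys is the embedding of the result into the stable family, which is the framework the test statistic is built on. One loose end common to both proofs: the degenerate law $\phi(t)=e^{it\theta}$ (i.e.\ $\lambda=0$) also satisfies \eqref{FE} with $b=a$, $c=0$, so non-degeneracy must be assumed rather than derived; you flag this with ``non-degeneracy ($\lambda>0$)'', while the paper's case analysis silently excludes it.
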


\begin{proof}
Suppose first that $\phi(t)$ is the CF of a $ C(\theta,\lambda)$, it is straightforward to verify that \eqref{FE} holds for for $b=a$ and $ c=0$.

Suppose now that \eqref{FE} is given, we will show that the choice $b=a$ and $ c=0$ implies that $\phi(t)$ is the CF of a $C(\theta,\lambda)$ random variable. Note first of all that, since for any $n>0$, $a/n>0$ and then $\phi(t)^{a/n}$ is a CF; it holds then that  $\phi(t)^a=[\phi(t)^{a/n}]^n$, and hence $\phi$ is an infinitely divisible CF. Next, from definition \eqref{FE} there exits $a_1, a_2>0$ with $a_1+ a_2=a$, $b_1, b_2>0$, $c_1, c_2 \in \mathbb{R}$ such that
\begin{equation}
\phi(t)^a=\phi(t)^{a_1}\phi(t)^{a_2}=\phi(tb_1)e^{itc_1}\phi(tb_2)e^{itc_2}
\end{equation}
from which we have 
\begin{equation}\label{LUC}
\phi(tb_1)\phi(tb_2)=\phi(bt)e^{itc'}
\end{equation}
with $c'=c-c_1-c_2$.
Formula \eqref{LUC}, implied by \eqref{FE}, corresponds to formula (5.7.2) in Luckacs (1970): there it is shown that the corresponding CF has the form (Luckacs, 1970, Theorem 5.7.2)

\begin{multline}\label{FCF}
\log \phi(t)=i t \mu -\frac{\sigma^2}{2} t^2 + \int_{-\infty}^0 \left(e^{itu} -1 - \frac{itu}{1+u^2}\right) d M(u)+ \\ + \int_0^{\infty} \left(e^{itu} -1 - \frac{itu}{1+u^2}\right) d N(u),
\end{multline}
where either $\sigma^2\neq 0$ and $M(u) \equiv 0$, $N(u) \equiv 0$ or $\sigma^2 = 0$ and $M(u) = K_1 |u|^{-\alpha} (u<0)$, $N(u) = -K_2 |u|^{-\alpha} (u>0)$ with $0<\alpha<2$, $K_1,K_2 \geq0$, $K_1+K_2>0$.

Consider first the case $M(u) \equiv 0$, $N(u) \equiv 0$, then relation \eqref{FE} does not hold for $b=a$ and $ c=0$ for the CF given by \eqref{FCF}.

Suppose now that $\sigma^2=0$, i.e. the stable case, we need to verify that equality \eqref{FE}  hold for $b=a$ and $ c=0$ only for the case $\alpha=1$. This is straightforward to see if we rewrite \eqref{FCF} in the more common form, for $0<\alpha<2$ (exclude for convenience the case $\alpha=2$), see Lukacs
Thm. 5.7.3
\begin{equation}
\log \phi(t) = i \theta t -\lambda|t|^\alpha \left[1+i \beta \frac{t}{|t|} \omega(|t|,\alpha) \right]
\end{equation}
where 
\begin{equation*}
\omega(|t|,\alpha)= 
\begin{cases} \tan(\pi \alpha/2) & \text{if $\alpha \neq 1$,}
\\
2 \pi \log|t| &\text{if $\alpha = 1$.}
\end{cases}
\end{equation*}
In our case, set $\beta=0$ and verify that \eqref{FE}  holds for $b=a$ and $ c=0$ only when $\alpha=1$.
\end{proof}

\subsection{Test statistic}

A test statistic for hypothesis \eqref{Hypo} can be defined by replacing $\varphi(\cdot)$ in \eqref{FE}  by an empirical estimate, say $\phi_n(\cdot)$, where 
\begin{equation}
\label{CF}
\phi_n(t)=\frac{1}{n} \sum_{j=1}^n e^{itX_j}
\end{equation}
and using the distance function
\begin{equation}\label{DeltaC}
\Delta_{n}(a,w)=n \int_{\mathbb {R}} |d_n(a,t)|^2 w(t) dt,
\end{equation}
with
\begin{equation}\label{CT}
d_n(a,t) = 	\phi_n(t)^a-\phi_n(t a)
\end{equation}
and where $w(\cdot)$ denotes a nonnegative weight function.  Recall that we can compute any power of a complex number by exploiting Euler's formula, i.e. for a complex number $z = u+ iv$, by Euler's formula we can write $z= |z| e^{i \theta}$ where $ \theta = \arctan (v/u)$ (in the appropriate quadrant) by which $z^a= |z|^a (\cos(a \theta)+ i \sin (a \theta))$.

The class of Cauchy distributions is closed under affine transformation, i.e. if $X \sim {\cal C}(\theta, \lambda)$ then, for constants $c,d$ it follows that $cX+d\sim  {\cal C}(c \theta+d, \lambda|c|)$; for this reason one is typically interested in affine invariant and consistent tests.  This can usually be achieved by transforming the data as $Y_j=(X_j-\hat \theta)/\sqrt{\hat \lambda}$, $j=1, \dots, n$ where $\hat \theta= \hat \theta(X_1, \dots, X_n)$ and  $\hat \lambda= \hat \lambda(X_1, \dots, X_n)$ are estimators of $\theta, \lambda$ such that, for each $c>0$, $d \in \mathbb{R}$,
\begin{equation}\begin{split}\label{AI}
\hat \theta(cX_1+d, \dots, c X_n+d)= c \hat \theta(X_1, \dots, X_n)+d \\
\hat \lambda(cX_1+d, \dots, c X_n+d)= c \hat \lambda(X_1, \dots, X_n)+d.
\end{split}\end{equation}
As it has been noted by Meintanis et al. (2015), the case of Cauchy distribution is quite peculiar:  if $\phi_n(t)$ is constructed using the variable $Y_j$, $j=1, \dots n$, then \eqref{CT} becomes
\begin{equation}
\phi_n(t)^a-\phi_n(t a)=e^{-i t a \hat \theta \hat \lambda^{-1/2}} [( \phi_n( \hat \lambda^{-1/2} t)^a- \phi_n( \hat \lambda^{-1/2} t \, a)].
\end{equation}
From the above equation one can see that $|d_n(a,t)|^2$ does not depend on the value of $\hat \theta $. For this reason based on Proposition 1 we can develop our test statistic using the scaled variables $Y_j=X_j/\sqrt{\hat \gamma}$ where$\hat \gamma$ is the   maximum likelihood (ML) estimate of $\gamma$ , i.e. the proposed estimator of $\phi(t)$ is
\begin{equation}\label{FTY}
\phi_n(t)=\frac{1}{n} \sum_{j=1}^n e^{itY_j}, \quad Y_j=X_j/\sqrt{\hat \gamma},
\end{equation}
with $\hat \lambda$  the maximum likelihood (ML) estimate of $\lambda$. The use of ML estimators has been suggested in the context of Cauchy testing by Matsui and Takemura (2005) which show their good performance also in comparison to other strategies of estimation. 

Finally, the choice $w(t)=\exp{[-\gamma t^2]}$, where $\gamma>0$ assures that \eqref{DeltaC} is affine invariant, its distribution does  not depend on $\theta, \lambda$ and explicit computational formulas are available.

In this final form the notation $\Delta_{n}(a,\gamma)$ is used for the test statistic  \eqref{DeltaC}. Implementation of the test requires to specify  the parameters $a$ and $\gamma$. It will be seen from the simulation results that a well specific combination of these values, namely $a=6$ and $\gamma=2.5$ provides an extremely powerful test for a large variety of alternatives.

From the results of Meintanis et al. (2015) one can derive consistency and the asymptotic distribution of $\Delta_{n}(a,\gamma)$. The asymptotic distribution can be expressed in terms of an infinite weighted sum of independent Chi-squared distributions. Given in our case we have a well defined indication in the choice of the parameters ($a=6$ and $\gamma=2.5$), for a practical implementation of the test, Table  \ref{tab:1} reports the critical points for tests of significance level $0.05$ and $0.1$.

For the case of integer parameter $a$, an explicit computational formula is available and it can be obtained by standard algebra as
\begin{equation}\begin{split}\label{CoFo}
\Delta_{n}(a,\gamma) =\frac{1}{n^{2a-1}} & \sum_{j_1, \dots j_a} I_w \left( Y_{j_1}+ \dots  Y_{j_a}- Y_{j_{a+1}}- \dots - Y_{j_{2a}}\right) +  \\
		& +  \frac{1}{n} \sum_{j_1, j_2} I_w \left( a(Y_{j_1} - Y_{j_{2}})\right) + \\
		& \quad -\frac{2}{n^{a}}  \sum_{j_1, \dots j_{a+1}} I_w \left( Y_{j_1}+ \dots  Y_{j_a} - a Y_{j_{a+1}}\right) 
\end{split}\end{equation}
where $Y_j$, $j=1, \dots n$ has been defined in \eqref{FTY} and summation is over the indexes $j_k =1, \dots, n$, $k=1,  \dots 2a$. Also,
\begin{equation}
I_w(x) = \int_{\mathbb R} \cos (tx) w(t) \, dt = \sqrt{\frac{\pi}{\gamma}} \exp\{- \frac{x^2}{4 \gamma} \}
\end{equation}
when $w(t)=\exp{[-\gamma t^2]}$. Formula \eqref{CoFo} is of order $n^{2a}$ and might become cumbersome to implement if the value $a$ is too large. In such a case one can resort to numerical approximations. 

\begin{table}[]
\renewcommand{\arraystretch}{1}
\centering
\caption{\footnotesize  Critical values the test statistics $\Delta_{n}(a,\gamma)$ with $a=6$ and $\gamma=2.5$ for $0.1$ and $0.05$ significance level tests.}
	\label{tab:1}
\begingroup\setlength{\fboxsep}{0pt}
		\begin{tabular}{ll|rrrr}
		\hline\hline		
			& $n \rightarrow$ & 10 & 30 & 50 & 100 \\ \hline	         
	Sig.	&0.05  & 2.61 & 2.99 & 3.00 & 3.00 \\ 
		& 0.10   & 2.29 & 2.56 & 2.59 & 2.59\\
				          \hline

\end{tabular}
\endgroup
\end{table}

\subsection{Other tests for the Cauchy hypothesis }

Standard classical tests such as the Kolmogorov-Smirnov, the Cram\'{e}r-von Mises, the Anderson Darling and the Watson statistics can be applied to test for \eqref{Hypo} by comparing the estimated empirical distribution function of $Y_j=1/2-\pi^{-1} \arctan[(X_j-\hat \theta)/\hat \lambda]$, $j=1, \dots, n$ and the distribution function of a $U(0,1)$. Details can be found in D'Agostino and Stephens (1986).

Among the mainstream approaches to test hypothesis \eqref{Hypo}, we find those proposed by  G{\"u}rtler and Henze (2000), Matsui and Takemura (2005) which compare  $\exp\{-|t|\}$ with the empirical CF \eqref{CF} calculated on the standardized data $Y_j=(X_j-\hat \theta)/\hat \lambda$, $j=1, \dots, n$. These approaches use a distance function analogous to \eqref{DeltaC} and differentiate each other for the estimation procedure adopted for estimation of $ \theta,  \lambda$. 

Meintanis (2001) has developed two test statistics for hypothesis \eqref{Hypo}: the first test exploits the fact that, under $H_0$, $c(t)$ and $\delta(t) $ are constant $\forall t \neq 0$ and where $c(t) =|t|^{-1} \log |\phi(t)|$ and $\delta(t)=t^{-1} \arg \phi(t)$ with $\arg \phi(t) = \tan^{-1} (I(t)/R(t))$ and $I(t)$ and $R(t)$ are, respectively, the imaginary and real part of $\phi(t)$. A second test is developed exploiting the functional equation  $|\phi(t+s)|^2 = |\phi(t)|^2|\phi(s)|^2$.  G{\"u}rtler and Henze (2000) note that these test statistics, while they are free from standardization and have simple asymptotic null distribution, are consistent only against some subclasses of alternatives.

Another available test, based on quantiles, is that of Rublík (2001), which defines a  test statistics based on the differences
\begin{equation}
\Delta_n= \left( F(X_{(1)}, \hat \theta, \hat \lambda)-\frac{1}{n+1}, F(X_{(n)}, \hat \theta, \hat \lambda)-\frac{n}{n+1}\right)
\end{equation}
where $F$ is defined in \eqref{DF} and $X_{(i)}$ $i=1, \dots n$ indicates the order statistics of the sample. The asymptotic distribution of this statistic is unknown, however it does not depend on the parameters of the underlying distribution.

A further proposal based on characterization is that of Litvinova (2005), which exploits sample counterpart of the following characterizations: a) given $X,Y$ continuous and independent random variables, then $X \stackrel{D}{=} (X+Y)/(1-XY)$ if and only if $X$ and $Y$ are ${\cal C}(0,1)$; b) given $X,Y$ continuous and independent random variables and given constants $|a|+|b|=1$ such that $-\log a$ and $-\log b$ are incommensurable then $X \stackrel{D}{=} aX+bX$ if and only if $X$ is ${\cal C}(0,\lambda)$ where the scale parameter is arbitrary. The test statistics are based on $U$-statistics counterparts of the above characterization.  For further review and comparisons see also Onen et al. (2001).

\section{Monte Carlo comparisons}

We perform here a simulation study to investigate the actual performance of the test statistic under various alternatives. The chosen alternatives, sample sizes and significance level allow us to compare the performance of test statistics \eqref{DeltaC} with the extensive simulation of G{\"u}rtler and Henze (2000) and Matsui and Takemura (2005) which develop CF-based tests and compare their performance to  classical tests such as the Kolmogorov-Smirnov, the Cram\'{e}r-von Mises, the Anderson Darling and the Watson statistics (see above for details), a powerful  UMP invariant test for hypothesis \eqref{Hypo} against normality is also discussed in G{\"u}rtler and Henze (2000).

\begin{table}[]
\renewcommand{\arraystretch}{1}
\centering
\caption{\footnotesize Percentage of rejection of the test for the univariate Cauchy null hypothesis; Sample size $n=20$, $a=2, 4, 6$, $\gamma = 0.5, 1, 2.5$; Significance level $q=10\%$, $M=3000$ Monte Carlo trials.}
	\label{tab:2}
\begingroup\setlength{\fboxsep}{0pt}
		\begin{tabular}{l|rrr|rrr|rrr}
		\hline\hline		
				              $\gamma  \rightarrow$          & \multicolumn{3}{c}{0.5}&\multicolumn{3}{c}{1}& \multicolumn{3}{c}{2.5} \\ \hline
		  		    $a \rightarrow$  & {\it 2} &  {\it 4} &  {\it 6}& {\it 2} &  {\it 4} &  {\it 6} & {\it 2} &  {\it 4} &  {\it 6} \\ \hline
	   {$t_1$}         &9&11&9&9&10&9&9&10&9\\
	    {$t_2$}     &12&9&6&19&20&21&25&24&27 \\
	{$t_4$}     &15&15&9&34&38&41&44&46&50 \\    
	   	{$t_5$}     &16&17&9&38&42&44&49&51&55 \\
	   	{$t_{10}$}   &21&23&15&47&52&56&59&61&68 \\ \hline
	   	{${ {S}}_{0.5}$}  &29&34&42&32&30&33&34&30&30  \\
	   	{${{ S}}_{0.8}$}   &13&16&16&11&9&11&9&6&9 \\
	   	{${{S}}_{1.2}$}  &10&10&6&14&14&13&16&16&17  \\
	   	{${{S}}_{1.5}$}  &14&15&7&27&31&30&34&36&37  \\
	   	{${{S}}_{1.7}$} &16&18&11&38&41&44&47&49&53 \\ \hline
	   	$Tuk_{1.0}$  &11&12&12&9&8&9&9&7&8 \\
	   	$Tuk_{0.2}$  &13&14&7&30&32&34&39&40&44 \\
	   	$Tuk_{0.1}$  &19&18&11&42&43&49&54&53&60 \\
	   	$Tuk_{0.05}$  &21&22&14&50&52&56&61&63&67 \\ \hline
	   	{${{N}}(0,1)$}    &26&29&19&56&62&66&68&71&75 \\
	   	 {$Lap$}    &12&12&7&24&26&28&32&32&37  \\
	   	 {${{U}}(0,1)$}    &72&78&74&90&93&95&93&94&96  \\ \hline

\end{tabular}
\endgroup
\end{table}

The alternative distributions considered here are: 
\begin{itemize}

\item[1)] symmetric $t$-Student distributions, denoted with $t_{\nu}$ where $\nu$ are the degrees of freedom.

\item[2)] Symmetric $\alpha$-stable distributions indicated with $S_{\alpha}=S(\alpha,0,0,1)$ where $S(\alpha, \beta, \mu, \sigma)$  denotes a stable distribution with index of stability $\alpha$, and where $\beta$, $\mu$ and $\sigma$ indicate, respectively, asymmetry, location and  scale; here we have $0< \alpha \leq 2$, $0 \leq \beta \leq 1$, $\mu \in \mathbb{R}$, $\sigma>0$.

\item[3)] Tukey distributions, denoted with $Tuk_\nu$; where $\nu$ is a parameter used in the transformation $Y = Z \exp\{ \nu Z^2/2\}$ where $Z$ is a standard normal random variable and $Y$ has Tukey distribution with parameter $\nu$. The case $\nu=0$ corresponds to the standard normal distribution, while the case $\nu=1$ gets a Cauchy-like distribution. 

\item[4)] Other short-tailed and long tailed alternatives, such as the standard normal distribution, denoted as $N(0, 1)$, the Laplace distribution with density $f(x)=\exp\{-|x|\}/2$, $x \in \mathbb{R}$, denoted simply with $Lap$ and  the uniform on the unit interval, denoted by $U(0,1)$.
\end{itemize}

\begin{table}[]
\renewcommand{\arraystretch}{1}
\centering
\caption{\footnotesize Percentage of rejection of the test for the univariate Cauchy null hypothesis; Sample size $n=50$, $a=2, 4, 6$, $\gamma = 0.5, 1, 2.5$; Significance level $q=10\%$, $M=3000$ Monte Carlo trials.}
	\label{tab:3}
\begingroup\setlength{\fboxsep}{0pt}
		\begin{tabular}{l|rrr|rrr|rrr}
		\hline\hline		
				              $\gamma  \rightarrow$          & \multicolumn{3}{c}{0.5}&\multicolumn{3}{c}{1}& \multicolumn{3}{c}{2.5} \\ \hline
		  		    $a \rightarrow$  & {\it 2} &  {\it 4} &  {\it 6}& {\it 2} &  {\it 4} &  {\it 6} & {\it 2} &  {\it 4} &  {\it 6} \\ \hline
	   {$t_1$}        & 10	 & 10 & 10 &  10 & 10 & 10 &10&9&10\\
	    {$t_2$}     & 16  &36&47& 16  &41&52  &16&43&54 \\
			{$t_4$}     &  35 &74&87&  41&79&89 &45&84&92 \\    
	   	{$t_5$}     &  41 &82&91& 48 &87&94  &54&89&96 \\
	   	{$t_{10}$}    &  55 &92&98& 66 &95&98  &73&97& 99 \\ \hline
	   	{${ {S}}_{0.5}$}  &  74 &84& 89 &78 &84& 90  &83&87&89  \\
	   	{${{ S}}_{0.8}$}   &  19   &16 &19 & 21 &16 & 21 &28&22&21 \\
	   	{${{S}}_{1.2}$}  & 12  &16 &23 & 11 & 16 &25 &9&20&25  \\
	   	{${{S}}_{1.5}$}  & 26  &50 &64 & 29 & 51 & 66 &28&60&68  \\
	   	{${{S}}_{1.7}$}  & 43  &75 &86 & 49 & 76& 88 &53&85&89 \\ \hline
	   	$Tuk_{1.0}$  &11&13&14&11&13&11&10&11&12 \\
	   	$Tuk_{0.2}$  &28&32&34&63&68&73&77&80&85 \\
	   	$Tuk_{0.1}$  &43&52&60&84&89&92&77&80&85 \\
	   	$Tuk_{0.05}$  &59&67&77&94&96&97&98&98&99 \\ \hline
	   	{${{N}}(0,1)$}   & 72  & 96 & 99 & 81 & 97 & *  & 89 &99&* \\ 
               {$Lap$}    &20&23&29&52&58&66&70&74&81  \\
	   	 {${{U}}(0,1)$}    &* &* &* &* &* &* &* &* &*   \\ \hline
\end{tabular}
\endgroup
\end{table}

Table \ref{tab:2} and Table \ref{tab:3} report, respectively for $n=20$ and $n=50$ the power  of $\Delta_{n}(a,\gamma)$ expressed as the percentage of samples declared significant for the alternative considered; results are based on 3000 Monte Carlo replications in each case. The results give some very clear indication which we summarize below:

\begin{itemize}

\item The choice of $a$ and with $\gamma$ has a strong impact on the power of the test; however the case   $a=6$ and $\gamma=2.5$ is always the one with highest or close to highest power. This results suggest using this case as the standard one in applications. Further simulation results, not shown here, confirm that a too large valure of $\gamma$ results in a loss of power and shall not be considered.

\item Comparing the results of the table with those appearing in Tables 6 and 7 of G{\"u}rtler and Henze (2000) and Tables 5 and 6 of Matsui and Takemura (2005), which have the same set-up, we see that the choice of $a$ and $\gamma$ suggested, in  all cases, but the $S_\alpha$ with $\alpha<1$, has higher, or comparable, power with respect to the CF-based and other test statistics considered in those papers. The power of the test based on $\Delta_n(6,2.5)$ in most cases is also comparable with the power of the UMP invariant test against normality discussed in G{\"u}rtler and Henze (2000).

\end{itemize}

\section{Conclusions}

A test based on a characterization of the Cauchy distribution has been discussed. The test statistic presented does not depend on location, is consistent, its distribution is parameter free and has extremely high power, compared to several competitor tests, in a wide range of alternative distributions. From practical point of view the test is quite simple to implement  given  the choice  $a=6$ and $\gamma=2.5$ always performs well. Critical values for this case are available and stabilize already for $n=50$.

\end{document}